\documentclass{amsart}[12]
\usepackage{amssymb,lmodern,amsthm}
\usepackage{cleveref}
\parskip6pt

\newtheorem{example}{Example}
\newtheorem{lem}{Lemma}
\newtheorem{prop}{Proposition}
\newtheorem{thm}{Theorem}

\newcommand\cO{\mathcal{O}}
\newcommand\fp{\mathfrak{p}}
\newcommand\f{\varphi}
\newcommand\g{\psi}
\newcommand\Spec{{\mathrm{Spec\,}}}
\newcommand\ovl{\overline}
\newcommand\nl{\hfill\break}
\newcommand\ds{\displaystyle}

\newcommand\EGA{1}
\newcommand\FG{2}
\newcommand\GW{3}
\newcommand\Gro{4}
\newcommand\Gru{5}
\newcommand\Ka{6}
\newcommand\MG{7}
\newcommand\Ma{8}
\newcommand\MO{9}
\newcommand\RG{10}
\newcommand\St{11}
\newcommand\Va{12}

\begin{document}
\title{Smooth scheme morphisms: a fresh view}
\subjclass{14B25 (Primary); 13N05} 
\keywords{{scheme morphism}, {smoothness}, {formally smooth}, {standard smooth}, 
{smooth locus}, {K\"ahler differentials}}
\author{Peter M. Johnson}
\address{Departamento de Matem\'atica, UFPE \\
50740--560, Recife--PE, Brazil}
\email{peterj@dmat.ufpe.br}
\date\today

\begin{abstract}
Relations between some kinds of formal and standard smoothness, for 
morphisms of schemes, are clarified in surprisingly simple and direct ways,
bypassing much of the customarily employed machinery.
Even the deep local-to-global property of formal smoothness has 
a fairly elementary proof, under mild additional hypotheses.
\end{abstract}

\maketitle

\section{Introduction}

Using methods as elementary as possible, accessible to those having only a 
basic understanding of scheme theory, we present efficient new proofs of
the equivalence of some of the main kinds of smoothness of scheme morphisms.  
While smoothness is clearly important in algebraic geometry, there is no general 
agreement about which among several seemingly different definitions should be 
chosen as the basic one that best expresses the concept.  
In certain situations, one approach to smoothness can be distinctly easier to 
work with than others, so it is useful to make transitions whenever convenient.
Our aim is to make such processes thoroughly transparent. 

We innovate by proving key results 
using machinery so limited that concepts such as dimension,
tangent spaces, fibres, regularity and flatness do not appear.  
Discussion of those, as well as the important topic of \'etale morphisms,
has been relegated to another article on consequences and further
characterizations of smoothness.  Only trivial facts will be cited,  
except in many non-essential comments where various concepts 
and results are presupposed in order to make comparisons with other work.
K\"ahler differentials are treated as forming modules rather than sheaves,
and will play an important role in proofs.
Unlike much written in this area, there will be no assumptions or reductions 
involving Noetherianity.
Although they are useful for clarifying some aspects of smoothness, Grothendieck 
topologies other than the Zariski topology will not be treated here.

Stimulus for writing this article came from examining standard sources
selected from a vast literature, notably G\"ortz and Wedhorn [\GW] and 
Vakil [\Va] as well as the encyclopedic Stacks Project book [\St] and EGA [\EGA].  
The first two provide motivation and additional details, important for
elucidating the significance of smoothness.
Except where noted, our definitions coincide with those of [\St].
Some of the most relevant will be repeated below. 
For reasons of stability, references to [\St] are given in the form 
[Tag ....], and can be consulted on-line.
In [\St], as in [\EGA] and many other works (one interesting survey being 
[\Ma]), further variations on notions of smoothness appear, often phrased 
in terms of certain kinds of intersections or using $I$-adic topology.
Homological methods yield deeper results, some presented in [\MG]. 
Our scope is far more limited. 

Formal smoothness and standard smoothness will be regarded as properties of 
morphisms between schemes, to be studied from Zariski-local and stalk-local 
points of view as properties of (homo)morphisms of commutative rings.
The first of these notions of smoothness, due to Grothendieck, has an intrinsic
(presentation-independent) definition well suited for formal diagrammatic 
demonstrations.  The other notion, seemingly more concrete, asserts the 
existence of presentations by generators and relations of a certain form.  
Precise details are given in the next section.  

A different approach, adopted for example in [\St],
takes smoothness to be at heart a property of the sheaf of differentials.
To properly formalize the idea, the naive cotangent complex of a 
morphism [Tag 00S0] is introduced.  This object is shown in [Tag 031J] to be 
in some sense trivial precisely when the morphism is formally smooth.  
An example in [Tag 0635], with a morphism not locally of finite type, 
shows that the full cotangent complex need not behave as well. 
However, smooth morphisms are by definition required to be locally finitely 
presented.  In that case, a refined approach to differentials allows  
formal smoothness to be exploited in a previously unknown way,
yielding a proof of the first theorem below. 
It then becomes easy to bypass the naive cotangent complex in proofs of results 
about smoothness, if one is willing to take the foundation stone of the theory
to be Grothendieck's abstract definition rather than one based explicitly on differentials.

In what follows, the non-standard term `around $x$' pertains to local properties,
those that hold on some restriction $U \to V$ to affine open subschemes with 
$x \in U$, and on further such restrictions.
We also write `at $x$' for properties that are are even more local, defined from a 
single map $\cO_{Y,\f(x)} \to \cO_{X,x}$ to the stalk at $x$ from the stalk at $\f(x)$.
Such maps are local homomorphisms $R_{\fp'} \to S_\fp$ of local rings.
In the terminology just introduced, and with the definitions of smoothness 
as given in the next section, the first fundamental result is:

\begin{thm}\label{fund}
Let $\f: X \to Y$ be a scheme morphism, locally of finite presentation.
For each point $x$ of $X$, the following are equivalent:
\nl (a) $\f$ is formally smooth around $x$;
\nl (b) $\f$ is standard smooth around $x$;
\nl (c) $\f$ is formally smooth at $x$;
\nl (d) $\f$ is standard smooth at $x$.

\end{thm}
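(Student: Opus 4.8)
The plan is to prove a cycle of implications. Working affine-locally, near $x$ the morphism $\f$ is a ring homomorphism $R\to S$ of finite presentation, and on stalks it is the induced local homomorphism $R_{\fp'}\to S_{\fp}$, with $\cO_{Y,\f(x)}=R_{\fp'}$ and $\cO_{X,x}=S_{\fp}$. Several implications are routine: a standard smooth algebra is formally smooth — giving (b)$\Rightarrow$(a) and (d)$\Rightarrow$(c) — by the usual infinitesimal lifting computation, lifting generators and correcting them by means of the invertible $c\times c$ Jacobian, the general nilpotent case reducing to the square-zero one; and both notions pass from a neighborhood to the stalk — giving (a)$\Rightarrow$(c) and (b)$\Rightarrow$(d) — because each survives the base change $R\to R_{\fp'}$ and the subsequent localization to $S_{\fp}$ (a localization being formally \'etale, and standard smooth after adjoining a variable and a relation $tg-1$). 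The substance lies in the two reverse directions: that finite presentation upgrades formal smoothness to a standard smooth presentation, and that standard smoothness at $x$ spreads to a neighborhood.

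For the first, suppose $R\to S$ is of finite presentation and formally smooth. Then $\Omega_{S/R}$ is a finitely presented $S$-module, and formal smoothness exhibits it as a direct summand of a free module (the conormal sequence of any surjection onto $S$ from a polynomial $R$-algebra is split exact), hence it is locally free; shrinking $\Spec S$ around $x$ we may assume $\Omega_{S/R}$ free with basis $dg_1,\dots,dg_n$ for some $g_i\in S$. The decisive step — which I expect to be the crux — is that the $R$-algebra map $R_0:=R[t_1,\dots,t_n]\to S$, $t_i\mapsto g_i$, is then \emph{formally \'etale}: for an infinitesimal $R_0$-lifting problem, an $R$-algebra solution exists by formal smoothness of $R\to S$ and can be corrected into an $R_0$-algebra solution by the $R$-derivation of $S$ taking prescribed arbitrary values on the basis $dg_i$, while any two $R_0$-solutions differ by an $R$-derivation annihilating every $dg_i$, hence coincide. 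Now $S$ remains of finite presentation over $R_0$, say $S=R_0[x_1,\dots,x_M]/I$ with $I$ finitely generated; since $R_0\to S$ is formally \'etale, $\Omega_{S/R_0}=0$ and the conormal sequence over $R_0$ is split, so the conormal map $I/I^2\to S^{M}$ is an isomorphism, and choosing $f_1,\dots,f_M\in I$ whose classes form a basis of $I/I^2$ makes $(\partial f_i/\partial x_j)_{1\le i,j\le M}$ invertible in $S$. Nakayama, applied after localizing $R_0[x_1,\dots,x_M]$ at the prime above $x$, shows these $f_i$ generate $I$; propagating this by a localization harmless for a property ``around $x$'' and adjoining one variable to realize that localization as a quotient yields a standard smooth presentation over $R$, which proves (a)$\Rightarrow$(b). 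The identical argument applied to $R_{\fp'}\to S_{\fp}$ — whose module of differentials is still finitely presented because $\f$ is locally of finite presentation, with the auxiliary polynomial ring now localized at the relevant prime — gives (c)$\Rightarrow$(d), once the resulting presentation is re-localized at its Jacobian determinant to replace a localization-at-a-prime by an honest standard smooth algebra.

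It then remains to spread standard smoothness from the stalk: (d)$\Rightarrow$(b). If $\cO_{X,x}$ is a localization of a standard smooth $\cO_{Y,\f(x)}$-algebra, then — using that $\f$ is locally of finite presentation, so that $\cO_{X,x}$ is also a localization of the finitely presented $R$-algebra $S$ — the standard finiteness (limit) arguments let the standard smooth presentation, which involves only finitely many coefficients and the invertibility of one determinant, be defined over some $R_h$ ($h\notin\fp'$) and matched with $S_g$ ($g\notin\fp$), exhibiting an affine restriction $\Spec S_g\to\Spec R_h$ of $\f$ containing $x$ as standard smooth, whence (b). Combining this with (b)$\Rightarrow$(a), (b)$\Rightarrow$(d), (d)$\Rightarrow$(c), and the implications of the previous paragraph closes the cycle $\text{(a)}\Leftrightarrow\text{(b)}\Leftrightarrow\text{(d)}\Leftrightarrow\text{(c)}$.

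The hard part will be the middle paragraph, and within it the idea of spending formal smoothness precisely to make the auxiliary morphism $R_0\to S$ formally \'etale: after that $I/I^2$ automatically has full rank $M$, the Jacobian is invertible for free, and neither the naive cotangent complex nor flatness enters. The rest is bookkeeping — confirming that the modules of differentials at play are finitely presented, so that projectivity yields local freeness and Nakayama applies, and tracking the localizations relating ``around $x$'' to ``at $x$'', the conversion of a localization-at-a-prime into a genuine standard smooth algebra in the ``at $x$'' case being where local finite presentation is truly indispensable.
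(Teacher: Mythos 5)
Your proposal is correct, but it follows a genuinely different route from the paper on the substantive directions. The paper never introduces \'etale coordinates: it proves (c)$\Rightarrow$(b) in a single stroke by working with the \emph{given} finite presentation $\ovl S=P/I_1$, selecting (as in \Cref{I1I0}) a maximal subfamily of the listed relators whose differentials are independent modulo the prime, and then using the split conormal sequence of \Cref{summ} together with the direct-summand Nakayama argument of \Cref{Naka} to show that the remaining relators become redundant after one principal localization; this simultaneously produces the standard smooth presentation and the passage from the stalk to a neighborhood, and it is what makes the explicit smooth-locus formula of \Cref{locus} fall out as a byproduct. You instead prove (a)$\Rightarrow$(b) and (c)$\Rightarrow$(d) by showing $\Omega_{S/R}$ is finitely presented projective, hence free near $x$ with basis $dg_1,\dots,dg_n$, observing that $R[t_1,\dots,t_n]\to S$, $t_i\mapsto g_i$, is formally \'etale (a correct and nice argument, using exactly the splitting of \Cref{summ} plus the torsor description of lifts), deducing an invertible-Jacobian presentation over that polynomial ring via $\Omega_{S/R_0}=0$ and Nakayama, and then handling the stalk-to-neighborhood passage separately through (d)$\Rightarrow$(b) by spreading-out/limit arguments (descending the standard smooth presentation and the isomorphism with $\cO_{X,x}$ to some $S_g$ over $R_h$). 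Your route buys the classical structural picture (smoothness as being \'etale over affine space) and is perfectly sound, but it needs extra standard machinery the paper deliberately avoids: local freeness of finitely generated projective modules and the routine but nontrivial descent of isomorphisms of essentially finitely presented algebras from stalks to principal localizations, which is the least detailed step in your sketch and would need to be written out; the paper's relator-selection argument sidesteps all of this and stays closer to the given data.
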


The $x \in X$ satisfying these conditions form the point set of
an open subscheme of $X$, the {\em smooth locus} of $\f$.
When this is $X$, $\f$ is said to be smooth.
As a byproduct of the proof techniques, the idea of studying smoothness
from given generators and relations can be justified.  It will be clear
after the proof of \Cref{fund} that it is straightforward to test at stalks. 
We even obtain a formula, too cumbersome to be of practical use, for the 
smooth locus.  Some open subsets of $\Spec(S)$ needed just below will be 
principal open sets $D(f)$, $f \in S$, while others are complements of 
closed sets $V(I)$, $I$ an ideal of $S$.

\begin{thm}\label{locus} If $S \cong R[x_1,\dots x_n]/(f_1, \dots, f_c)$, the 
smooth locus of $\f: \Spec(S)$ $\to \Spec(R)$ is a union of $\binom{n+c}c$ 
open sets $D(\Delta) \cap V(I)^\mathsf{c}$.  Each $\Delta$, a Jacobian determinant,
and $I$, an ideal of the form $(I_0:I_1)$, is defined by an explicit formula 
involving the given $f_i \in R[x_1,\dots x_n]$ and their formal partial derivatives
$\frac{\partial f_i} {\partial x_j}$.
\end{thm}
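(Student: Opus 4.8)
The plan is to combine \Cref{fund} with a bookkeeping argument about which standard smooth presentations can be extracted from the given one. Write $P=R[x_1,\dots,x_n]$, $I=(f_1,\dots,f_c)\subseteq P$, and for $T\subseteq\{1,\dots,c\}$ let $I_T$ be the ideal of $P$ generated by the $f_i$ with $i\in T$. By \Cref{fund} the smooth locus consists of the primes $\fp$ of $S$ at which $\f$ is standard smooth, so the heart of the argument is the claim: $\f$ is standard smooth at $x$ if and only if there is a pair $(T,J)$ with $T\subseteq\{1,\dots,c\}$, $J\subseteq\{1,\dots,n\}$ and $|T|=|J|$ such that $\Delta_{T,J}:=\det\bigl(\frac{\partial f_i}{\partial x_j}\bigr)_{i\in T,\,j\in J}\notin\fp$ and $(I_T)_\fp=I_\fp$. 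For the forward direction I would use that, by the characterization of formal smoothness via the naive cotangent complex (see the discussion preceding \Cref{fund}), the conormal map $(I/I^2)_\fp\to S_\fp^{n}$, $\ovl f_i\mapsto\sum_j\frac{\partial f_i}{\partial x_j}\,dx_j$, is a split monomorphism with finite free cokernel, so that $(I/I^2)_\fp$ is finite free; since $\ovl f_1,\dots,\ovl f_c$ generate it, some subset $T$ of them is a basis (a generating family of a finite free module over a local ring contains a basis), whence $I_\fp=(I_T)_\fp$ by Nakayama, and reducing the split monomorphism $S_\fp^{|T|}\to S_\fp^{n}$ modulo $\fp$ leaves an invertible $|T|\times|T|$ minor, which is $\Delta_{T,J}$ for a suitable $J$. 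For the converse, $(I_T)_\fp=I_\fp$ forces $(I/I^2)_\fp=(I_T/I_T^2)_\fp$, the corresponding conormal map then has the unit minor $\Delta_{T,J}$, and so $(P/I_T)_{\Delta_{T,J}}$ is standard smooth over $R$ --- adjoining a variable $y$ with relation $y\Delta_{T,J}-1$ produces a presentation whose Jacobian acquires the unit minor $\Delta_{T,J}^2$ --- of which $S_\fp$ is a localization, so $\f$ is standard smooth at $x$ by \Cref{fund}.

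The rest is routine. For fixed $T$, the set of $\fp$ with $(I_T)_\fp\ne I_\fp$ is the support of the finitely generated $S$-module $I/(I_T+I^2)$ (a Nakayama argument identifies the two), hence the closed set $V\bigl((I_0:I_1)\bigr)$ with $I_0=I_T+I^2$ and $I_1=I$: this colon ideal contains $I$ and so descends to $S$, while $I_0,I_1$ are generated by visibly explicit polynomial combinations of the $f_i$, just as each $\Delta_{T,J}$ is an explicit expression in the $\frac{\partial f_i}{\partial x_j}$. Since ``$\Delta_{T,J}$ is a unit at $\fp$'' means $\fp\in D(\Delta_{T,J})$, the smooth locus equals
\[
\bigcup_{(T,J)}\;D(\Delta_{T,J})\,\cap\,V\bigl((I_0:I_1)\bigr)^{\mathsf{c}},
\]
the union over all pairs with $T\subseteq\{1,\dots,c\}$, $J\subseteq\{1,\dots,n\}$, $|T|=|J|$; by the Chu--Vandermonde identity there are $\sum_{k\ge 0}\binom{c}{k}\binom{n}{k}=\binom{n+c}{n}=\binom{n+c}{c}$ of them. (The term $k=0$, with $T=J=\emptyset$, $\Delta=1$ and the set reducing to $V\bigl((I^2:I)\bigr)^{\mathsf{c}}$, is the locus where $\f$ is smooth of relative dimension $n$.)

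The main obstacle --- the only step carrying content beyond \Cref{fund} and elementary commutative algebra --- is the claim that standard smoothness at $x$ is always detected \emph{within the given presentation}, i.e.\ by restricting to a subset of the relations and a subset of the variables and inverting one element, without ever introducing fresh generators. Its forward half rests on the freeness and the splitness of the conormal map at $\fp$, information supplied by the proof of \Cref{fund}, together with two elementary facts over a local ring: a generating family of a finite free module contains a basis, and a split monomorphism of finite free modules has a maximal minor that is a unit. The converse half is the ``adjoin a variable'' manoeuvre plus the routine, if definition-sensitive, check that it yields a genuine standard smooth presentation of the stalk. Once this equivalence is in hand, the support computation, the reduction to sets $D(\Delta)\cap V(I)^{\mathsf{c}}$ and the count $\binom{n+c}{c}$ are formal.
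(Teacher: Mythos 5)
Your proposal is correct, and its global architecture coincides with the paper's: reduce to a stalk criterion via \Cref{fund}, index the candidate open sets by pairs $(T,J)$ of equal-size subsets of the relations and of the variables, attach to each pair a Jacobian minor $\Delta_{T,J}$ and a colon ideal, and count the pairs by Chu--Vandermonde to get $\binom{n+c}{c}$. The differences lie in how the stalk criterion is obtained and in the exact colon ideal. The paper extracts the criterion directly from material already proved: \Cref{I1I0} selects $T$ (and $J$) from a maximal-rank square submatrix of the Jacobian modulo $\fp$, and the final step of the proof of \Cref{fund} already gives $gI_1\subset I_0$ for some $g\notin\fp$ with $I_0$ the subset-generated ideal, so the closed set is simply $V((I_0:I_1))$ and no further module theory is needed. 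You instead re-derive the criterion from the splitting of the conormal sequence at the stalk, choosing $T$ as a basis of the free module $(I/I^2)_\fp$; this works, but it imports two extra facts (finitely generated projective modules over a local ring are free, and a generating family of a finite free module contains a basis), and you appeal to the naive cotangent complex characterization, which this paper deliberately bypasses --- \Cref{summ}, applied over $R_{\fp'}$ with the localized polynomial ring as ambient ring, supplies exactly the splitting you use and is what you should cite. Your closed set $V\bigl((I_T+I^2:I)\bigr)$ differs as an ideal from the paper's $V\bigl((I_0:I_1)\bigr)$, but by the same Nakayama argument the two cut out the same subset of $\Spec(S)$, so your formula is an equally valid instance of the statement; the converse direction (Rabinowitsch trick plus localization, with the harmless base change from $R$ to $R_{\fp'}$) and the final count are the same in both treatments.
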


The final section concerns a distinctly deeper fundamental result: an
arbitrary scheme morphism $\f:X \to Y$ is formally smooth precisely when 
it is locally formally smooth, by which we mean this holds around all 
points of $X$, with `around' as defined above.
Under an assumption that includes the case where $\f$ is locally of 
finite type, we show that elementary methods, carefully deployed, 
yield a proof technically simpler than previously known ones.

\section{Definitions and notation}

All rings considered will be commutative algebras (with $1$, and
allowing the case $1=0$) over a ring called $R$.  Maps between rings,
including derivations, are taken to be $R$-linear.  As in [Tag 00TH],
a ring morphism $R \to S$ is {\em formally smooth} if, in the category
of $R$-algebras, every morphism $S \to A/I$, where $I$ is an ideal of
$A$ with $I^2 = 0$, lifts to (or factors through) a morphism $S \to A$.
In other words, the defining property is that all these $A \to A/I$
induce surjective functions $\textrm{Hom}_R(S,A)\to\textrm{Hom}_R(S,A/I)$.

A simple observation, not needed below, motivates the definition.  On 
dropping the assumption $I^2 = 0$, when $S$ is formally smooth over $R$
there is instead a lift $S \to A/I^2$, then in turn a lift $S \to A/I^4$,
and so on.  One can easily interpose $A/I^3$ and other terms.  Thus,
after countably many choices, the given $S \to A/I$ factors through $S
\to \hat A$, where $\hat{A}$ is the inverse limit
$\ds\lim_{\longleftarrow} A/I^n\!,$ the $I$-adic completion of $A$.

Formal smoothness of a scheme morphism $\f: X \to Y$ has an analogous
definition [Tag 02GZ], best stated relative to the category of schemes
over $Y$ [Tag 01JX].   We say $\f: X \to Y$ (or even just $X$) is 
{\em formally smooth} if every morphism $\Spec(A/I) \to \Spec(A)$ over $Y$,
where $I^2 = 0$, induces a surjective function 
$\textrm{Hom}_Y(\Spec(A),X) \to \textrm{Hom}_Y(\Spec(A/I),X)$. 

Among various results that are trivial consequences of the definition,
we note that of [Tag 02H3]: every restriction of $\f:X \to Y$ 
to a morphism $U \to V$ between open subschemes of $X$ and $Y$ is 
formally smooth if $\f$ is.  It is also clear that any such restriction
$U \to V$ is formally smooth precisely when $U \to Y$ is.
Further properties, not needed here, show the robustness of this notion.
The really striking fact, mentioned earlier, is that the formal smoothness 
of a morphism turns out to be determined by local properties alone
(it is local in the sense of [Tag 01SS]).  One can find further details of
interest, in the context of algebraic spaces, in [Tag 049R].

Given an explicit finite presentation of $S$ over $R$, say 
$S \cong P/(f_1,\dots,f_c)$, where $P$ denotes the polynomial ring 
$P = R[x_1,\dots x_n]$, the image in $S$ of any $f$ in $P$ is also 
called $f$ when the meaning is clear.  We prefer to use the name $I_0$ 
for the ideal $(f_1,\dots,f_c)$ of $S$, reserving $I$ for other uses. 
The cases $c=0$ or $n=0$ are allowed, rewording where necessary. 

Departing slightly from the presentation-dependent definition in [Tag 00T6], 
we say a ring morphism $R \to S$, or $S$ as an $R$-algebra, is {\em standard
smooth} over $R$ if $S$ can be presented as above, where $c \leq n$
and the $c \times c$ matrix $J$ with $(i,j)$-entry $\frac{\partial f_i}
{\partial x_j}$ maps to an invertible matrix over $S$, or equivalently
$\textrm{det}(J)$ is invertible in $S$.  Also say $R \to S$ is 
{\em essentially standard smooth} if $S$ is isomorphic to a localization
of some standard smooth $R$-algebra.  This is of most interest when
$S$ arises from localizing at a prime ideal.  At the other extreme,
a principal localization $S = T_g$ $(g \in T)$ of a standard smooth
$R$-algebra $T$ remains standard smooth, from the now commonplace
Rabinowitsch trick: introduce a new variable $x$ and relator $x.g-1$.

A morphism $\f: X \to Y$ of schemes is {\em standard smooth around a point} 
$x \in X$ if there are affine open subsets $U$, $V$ with $x \in U$
such that the restriction of $\f$ to $U \to V$ is induced from a standard
smooth ring homomorphism.  By definition $\{x \in X \mid \f\, \textrm{
is standard smooth around } x\}$ is open in $X$.  On this, the smooth
locus, $\f$ is said to be {\em locally standard smooth}, a property
with many desirable consequences.  
Maps between stalks cannot be expected to be finitely presented. 
Thus, when $\f$ is said to be {\em standard smooth at $x$}, for some $x \in X$, 
we mean that the 
induced map $\cO_{Y,\f(x)} \to \cO_{X,x}$ is essentially standard smooth. 
Here $\cO_{Y,\f(x)}$ plays the role of $R$.

\begin{example} Suppose $2$ is a unit of $R$ and $S = R[x,y]/(x^2+y^2-1)$.
While $R \to S$ is not standard smooth (which could be verified later, from
the module of differentials), the morphism $\Spec(S) \to \Spec(R)$ is locally 
standard smooth. This follows at once, using the open cover $\{D(x), D(y)\}$ 
of $\Spec(S)$.
\end{example}

\section{Derivations and differentials}

The usual differential on $P = R[x_1, \dots, x_n]$ is $df = \sum_j
\frac{\partial f} {\partial x_j}\,dx_j$, where to be definite the $dx_j$
(in order) are taken to be the elements of the canonical basis of the module $P^n$.
With $S \cong P/I_0$ and $I_0 = (f_1, \dots, f_c)$ as before, there is a
derivation $d = d_S: S \to M(S)$, where $M(S)$, the $S$-module of differentials,
is the quotient of $S^n$ by the submodule generated by $\{df \mid f \in I_0\}$.
It suffices to impose the relations $df_i = 0$ ($1 \leq i \leq c$), since all
$f_i$ act as 0 on $M(S)$, so $d(\sum_i g_i f_i) = \sum_i (g_i.df_i + f_i.dg_i) = 0$.
However, finiteness of the number of generators and relations is clearly 
non-essential here, and indeed throughout this whole section.

Since $M(S)$ is a concrete representative of the K\"ahler module [Tag 07BK], 
satisfying (with $d_S$) the universal property [Tag 00R0] of being the 
freest possible $S$-module of $R$-differentials of $S$, we adopt the standard
notation $\Omega_{S/R}$ in place of $M(S)$.  A different representative, 
visibly presentation-independent, is furnished in [Tag 00RN].
A more refined notion of universality will appear in the next lemma.

The most subtle idea of the whole development is to compare certain
derivations, as in [Tag 02HP] and [Tag 031I] (which includes a converse).
These leave a few important details for readers to check.  We follow
roughly similar lines, placing more emphasis on universal concepts,
proving exactly what is necessary for our purposes.

To set notation, given an ideal $I$ of an $R$-algebra $S$, write
$\ovl S = S/I$, $S' = S/I^2$, with $s \mapsto s' \mapsto \ovl s$ under the
maps $S \to S' \to \ovl S$.  Note that $\ovl S$-modules can be regarded as
$S$-modules annihilated by $I$, or as $S'$-modules annihilated by $I/I^2$.
Similar remarks apply to derivations $d$.  The universal property in the
next result should be clear from the way it will be used.
As always, $R$-linearity is implicit.

\begin{lem}\label{univ} 
The usual $d_S: S \to \Omega = \Omega_{S/R}$, universal for derivations from
$S$ to $S$-modules, induces a $d_S': S' \to \ovl \Omega = \Omega/I\Omega$
that is universal for derivations from $S'$ to $\ovl S$-modules,
where $S' = S/I^2$ and $\ovl S = S/I$. 
\end{lem}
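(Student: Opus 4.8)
The plan is to verify the claimed universal property directly from that of $d_S$, using the standing observation that $\ovl S$-modules are exactly the $S$-modules killed by $I$, and that a derivation landing in such a module automatically factors through $S' = S/I^2$. First I would check that $d_S$ really does induce a well-defined map $d_S' : S' \to \ovl\Omega = \Omega/I\Omega$: since $d_S(s^2) = 2s\, d_S(s) \in I\Omega$ for $s \in I$, and more generally $d_S(st) = s\, d_S(t) + t\, d_S(s) \in I\Omega$ whenever $s,t \in I$, the composite $S \xrightarrow{d_S} \Omega \to \Omega/I\Omega$ kills $I^2$, hence factors through $S'$. The resulting $d_S'$ is an $R$-derivation into the $\ovl S$-module $\ovl\Omega$ (it is killed by $I$ by construction), where we use the ring map $S' \to \ovl S$ to regard $\ovl\Omega$ as an $S'$-module in the Leibniz rule.

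Next I would establish universality. Let $D : S' \to N$ be any $R$-derivation into an $\ovl S$-module $N$. Precompose with $S \to S'$ to get an $R$-derivation $S \to N$; by the universal property of $d_S = d_{S/R}$ there is a unique $S$-linear map $h : \Omega \to N$ with $h \circ d_S = D$. Since $N$ is annihilated by $I$, the map $h$ kills $I\Omega$, so it descends to an $\ovl S$-linear (equivalently $S'$-linear) map $\ovl h : \ovl\Omega \to N$ with $\ovl h \circ d_S' = D$. For uniqueness, note that $\ovl\Omega$ is generated as an $\ovl S$-module by the image of $d_S'$ — this is inherited from the fact that $\Omega$ is generated over $S$ by the image of $d_S$ — so any $\ovl S$-linear map out of $\ovl\Omega$ is determined by its values on $d_S'(S')$, forcing $\ovl h$ to be unique.

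I do not anticipate a serious obstacle here; the argument is a routine base-change-of-rings manipulation once the right category ($\ovl S$-modules, viewed inside $S$-modules) is fixed. The one point demanding a little care is bookkeeping about which ring acts: the target $\ovl\Omega$ must be treated as an $S'$-module via $S' \to \ovl S$ when we speak of ``derivations from $S'$,'' and one must confirm that $S'$-linearity of a map into an $I$-torsion module is the same as $\ovl S$-linearity, so that no information is lost in passing between the two viewpoints. That observation, already flagged in the paragraph preceding the lemma, is exactly what makes the descent $h \rightsquigarrow \ovl h$ legitimate and the whole statement essentially a formality.
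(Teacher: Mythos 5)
Your argument is correct and follows essentially the same route as the paper: both rest on the facts that a derivation into an $I$-torsion module kills $I^2$ (hence factors through $S'$) and that the unique $S$-linear map $\Omega \to N$ kills $I\Omega$ (hence descends to $\ovl\Omega$). The only cosmetic difference is that you get uniqueness from $\ovl\Omega$ being generated by the image of $d_S'$, while the paper phrases it via naturality of the bijection $d \mapsto d'$; both are fine.
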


\begin{proof}
Let $M$ be an arbitrary $\ovl S$-module, so $I.M = 0$ for $M$ as an
$S$-module. Every $d: S \to M$ must satisfy $d(I^2) = 0$, so induces
some $d': S' \to M$.  This correspondence $d \mapsto d'$ is bijective,
since any $d': S' \to M$ lifts via $S \to S'$ to some $d: S \to M$ that
in turn induces $d'$.  The bijection has a naturality property: given 
an $\ovl S$-module map $\alpha: M \to M'$, for every $d: S \to M$ we 
have $(\alpha \circ d)' = \alpha \circ d'$.

The derivation called $d_S': S' \to \ovl \Omega$ is the one induced from 
the composition of the universal $d_S: S \to \Omega$ and $\Omega \to 
\ovl \Omega$.  To verify the claimed universal property of $d_S'$,
start with any $d': S' \to M$, where $I.M=0$.  This is induced from some
$d:S \to M$, which factors through $d_S$ as $S \to \Omega \to \ovl \Omega
\to M$ for some unique $\ovl S$-linear map $\alpha: \ovl \Omega \to M$.
By naturality, uniqueness also holds for the factorization
$d': S' \to \ovl \Omega \to M$ through $d_S'$. 
\end{proof}

\begin{prop}\label{summ}
With notation as above, assume that $R \to \ovl S$ is formally smooth.
Then $d_S': S' \to \ovl \Omega$ restricts to an isomorphism of
$\ovl S$-modules between $I/I^2$ and a direct summand of $\ovl \Omega$,  
giving a split exact sequence 
$0 \to I/I^2 \to \ovl\Omega \to \Omega_{\ovl S/R} \to 0$.
In addition, $\Omega_{\ovl S/R}$ is a projective $\ovl S$-module.
\end{prop}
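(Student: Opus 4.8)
The single substantial ingredient is formal smoothness of $R \to \ovl S$, to be used exactly once. Write $N = I/I^2$; this is an ideal of $S'$ with $N^2 = 0$ and $S'/N = \ovl S$. Applying formal smoothness to the square-zero extension $S' \twoheadrightarrow \ovl S$ and to $\mathrm{id}_{\ovl S}$ yields an $R$-algebra section $\sigma\colon \ovl S \to S'$ of $S' \to \ovl S$. I would then consider $D\colon S' \to N$, $D(x) = x - \sigma(\ovl x)$; a routine verification — in which $N^2 = 0$ is precisely what forces the Leibniz rule to hold, and in which $N$ carries the $\ovl S$-module structure pulled back along $\sigma$ — shows that $D$ is an $R$-derivation of $S'$ into the $\ovl S$-module $N$, and that $D$ restricts to the identity on $N \subseteq S'$. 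By \Cref{univ}, $d_S'\colon S' \to \ovl\Omega$ is universal among $R$-derivations of $S'$ into $\ovl S$-modules, so $D$ factors uniquely as $D = \rho \circ d_S'$ for an $\ovl S$-linear map $\rho\colon \ovl\Omega \to N$.

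Next, the restriction of $d_S'$ to $N$ is $\ovl S$-linear into $\ovl\Omega$ — the cross term $n\,d_S'(s')$, with $n\in N$ and $s'\in S'$, lies in $N\cdot\ovl\Omega = 0$ since $N$ annihilates the $\ovl S$-module $\ovl\Omega$ — and this restriction is exactly the map $\delta\colon I/I^2 \to \ovl\Omega$ appearing in the standard right-exact sequence $I/I^2 \xrightarrow{\delta} \ovl\Omega \to \Omega_{\ovl S/R} \to 0$ (here $\ovl\Omega = \Omega_{S/R}\otimes_S \ovl S$; this is the usual conormal sequence, readily deduced from the universal properties already at hand). Because $D$ is the identity on $N$, one gets $\rho\circ\delta = \mathrm{id}_{I/I^2}$, so $\delta$ is a split monomorphism. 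Hence $\delta$ identifies $I/I^2$ with the direct summand $\delta(I/I^2)$ of $\ovl\Omega$, with complement $\ker\rho$; the conormal sequence becomes the split short exact sequence $0 \to I/I^2 \to \ovl\Omega \to \Omega_{\ovl S/R}\to 0$, and in particular $\Omega_{\ovl S/R}$ is isomorphic to a direct summand of $\ovl\Omega$.

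For the final assertion I would run the same argument once more from a new starting point: write $\ovl S = P/J$ with $P$ a polynomial $R$-algebra (on possibly infinitely many variables, which is harmless here) and $J = \ker(P \to \ovl S)$. The hypothesis applies verbatim with $S, I$ replaced by $P, J$, so $\Omega_{\ovl S/R}$ is a direct summand of the corresponding $\ovl\Omega$, namely $\Omega_{P/R}\otimes_P\ovl S$; since $\Omega_{P/R}$ is free on the differentials of the variables, this is a free $\ovl S$-module, and therefore $\Omega_{\ovl S/R}$ is projective. I expect the main obstacle to be purely organizational bookkeeping — in particular keeping straight that the $\ovl S$-structure on $N$ is the one pulled back along $\sigma$ while $\ovl\Omega$ is already an $\ovl S$-module — together with the derivation check for $D$, which is the one spot where passing to $S' = S/I^2$ (so that $N^2 = 0$) is indispensable.
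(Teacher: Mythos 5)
Your proposal is correct and follows essentially the same route as the paper: the section $\sigma$ obtained from formal smoothness of the square-zero extension $S'\to\ovl S$ is the paper's $\beta$, your $D(x)=x-\sigma(\ovl x)$ and its factorization $\rho\circ d_S'$ via \Cref{univ} are exactly the paper's $D$ and $\gamma$, and the projectivity argument via a presentation of $\ovl S$ as a quotient of a polynomial ring (so that the corresponding $\ovl\Omega$ is free) is the paper's concluding step. The only cosmetic difference is that you cite the conormal sequence for the identification of the cokernel with $\Omega_{\ovl S/R}$, whereas the paper reads it off directly as $\ovl\Omega/d_S'(I/I^2)\cong\Omega_{\ovl S/R}$.
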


\begin{proof}
Since $\ovl S$ is formally smooth (over $R$) and isomorphic to $S'/(I/I^2)$, 
where $I/I^2$ has square zero, some injective ring map $\beta: \ovl S \to S'$ 
is a right inverse of $S' \to \ovl S$.  Thus $S'$ is a split extension 
$(I/I^2) + \beta(\ovl S)$, a direct sum as $R$-modules.  Projection 
yields an $R$-linear function $D: S' \to I/I^2$ that satisfies
$s' = D(s')+\beta(\ovl s)$  $(s' \in S')$. 
A calculation of $s'.t'$ in $S'$, using $D(s').D(t') = 0$, shows that
$D$ is a derivation:
\[D(s'.t') = s'.D(t') + t'.D(s') \quad (s',t' \in S').\]

By \Cref{univ}, $D$ factors as $d_S': S' \to \ovl \Omega$ followed by 
some $\ovl S$-linear $\gamma: \ovl \Omega \to I/I^2$.  However $D$, being
a projection map, restricts to the identity function on $I/I^2$.  
Thus $d'_S$ maps $I/I^2$ isomorphically to an $\ovl S$-submodule of 
$\ovl \Omega$, with ker$(\gamma)$ as a complementary direct summand.
We have ker$(\gamma) \cong \ovl\Omega/d_S'(I/I^2) \cong \Omega_{\ovl S/R}$.

To conclude, $\ovl S$ can also be identified with a quotient $\ovl P = P/I_0$, 
where $P$ is a polynomial ring over $R$ or (for later use) a localization of some 
such ring.  Then $\Omega_{\ovl S/R} \cong \Omega_{\ovl P/R}$.  In the new 
split exact sequence, $\ovl \Omega$ is $\Omega_{P/R}/I_0\Omega_{P/R}$, which is a 
free $\ovl P$-module, using the following observation on localizations.
\end{proof}

Differentials behave well under localization [Tag 00RT(2)].
In brief, if $S \cong P/I_0$ is localized at a prime
ideal $\fp$,  with corresponding $\fp' \in \Spec(R)$, $\tilde\fp
\in \Spec(P)$,  there is a well-defined $d: P_{\tilde\fp} \to
P_{\tilde\fp} \otimes_P \Omega_{P/R}$, $f/g \mapsto (1/g)df - (f/g^2)dg$,
which induces $d: S_\fp \to S_\fp \otimes_S \Omega_{S/R}$, clearly
universal for $S_\fp$.  The last module can be identified with
$\Omega_{S_\fp/R_{\fp'}}$, as $R$ acts through  $R_{\fp'}$.
Similar results hold for any localization of $S$.

\section{Local relations between formal and standard smoothness}

Arguments in this section rely crucially on finiteness of presentation.
We start with the well-known fact that formal smoothness holds for presentations 
satisfying a version of the Jacobian criterion. 
The usual proof, given below, uses ideas from deformation theory closely related to
iterative methods for approximating solutions of systems of equations.
As often occurs, a desired result is obtained on restricting to an open subscheme, 
here $\Spec(S_\Delta)$ within $\Spec(S)$.  Renumberings of $x_1, \dots, x_n$ may 
alter $\Delta$, producing various subschemes, some possibly empty.

\begin{prop}\label{loc}
Suppose $S = P/I_0$, where $P = R[x_1,\dots,x_n]$, $I_0=(f_1, \dots, f_c)$
and $c \leq n$.  Let $J$ be the $c \times c$ matrix with $(i,j)$-entry
$\frac{\partial f_i}{\partial x_j} \in P$, with $\Delta$ the image of
$\textrm{det}(J)$ in $S$.  Then the principal localization $S_\Delta$
is a formally smooth $R$-algebra. 
\end{prop}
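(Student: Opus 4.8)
The plan is to verify the lifting property directly, using the Jacobian hypothesis to build lifts by Newton-style iteration. Suppose we are given an $R$-algebra $A$, a square-zero ideal $I \subseteq A$, and an $R$-algebra map $\g: S_\Delta \to A/I$. Since $S_\Delta$ is generated over $R$ by the images of $x_1,\dots,x_n$ together with $1/\Delta$, a map out of it is determined by elements $\bar a_1,\dots,\bar a_n \in A/I$ satisfying $f_k(\bar a_1,\dots,\bar a_n) = 0$ for all $k$ and with $\det J(\bar a)$ a unit in $A/I$. First I would choose arbitrary lifts $a_1^{(0)},\dots,a_n^{(0)} \in A$ of the $\bar a_j$; then the elements $f_k(a^{(0)}) \in A$ lie in $I$, and since $\det J(a^{(0)})$ reduces to a unit mod $I$, it is already a unit in $A$ (a standard fact: units lift along $A \to A/I$ when $I$ is contained in the Jacobson radical, which holds here as $I^2=0$). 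So $J(a^{(0)})$ is invertible over $A$.

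The key step is the correction: I would set $a^{(1)} = a^{(0)} - J(a^{(0)})^{-1} \cdot \mathbf{f}(a^{(0)})$, where $\mathbf{f}$ is the column vector $(f_1,\dots,f_c)^{\mathsf T}$ padded appropriately (only the first $c$ coordinates are corrected). The Taylor expansion of each $f_k$ to first order, valid exactly because the correction terms lie in $I$ and $I^2=0$, gives $f_k(a^{(1)}) = f_k(a^{(0)}) + \sum_j \frac{\partial f_k}{\partial x_j}(a^{(0)}) \cdot (a_j^{(1)} - a_j^{(0)}) = 0$ for $k=1,\dots,c$, by the very choice of correction. Thus the $a_j^{(1)}$ already satisfy all the relations $f_k$ exactly in $A$, and $\det J(a^{(1)})$ is still a unit in $A$ (it differs from $\det J(a^{(0)})$ by an element of $I$, so remains a unit). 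Therefore $x_j \mapsto a_j^{(1)}$, $1/\Delta \mapsto \det J(a^{(1)})^{-1}$ (up to the usual adjustment relating $\Delta$ to $\det J$), defines an $R$-algebra map $S_\Delta \to A$ lifting $\g$. Because $I^2 = 0$, a single correction step suffices; no genuine limit is needed.

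The main obstacle is bookkeeping rather than conceptual: one must be careful that the padding of $\mathbf{f}$ and $J$ to handle the case $c < n$ is done coherently (the last $n-c$ coordinates are left untouched, so only a $c\times c$ linear system is solved), and one must confirm that $\Delta$ — the image of $\det J$ in $S$, hence in $A/I$ via $\g$ — really does become invertible, so that the map genuinely extends to the localization $S_\Delta$ and not merely to $S$. A minor point worth stating explicitly is the lemma that $I^2 = 0$ forces $I \subseteq \mathrm{rad}(A)$, so that elements of $A$ mapping to units in $A/I$ are units; this is what makes $J(a^{(0)})$ invertible over $A$ in the first place and is the only external input beyond elementary algebra.
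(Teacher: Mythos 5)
Your proof is correct and is essentially the paper's argument: a single first-order (Newton-type) correction of arbitrary lifts of the images of the $x_j$, solvable exactly because $I^2=0$ makes the Taylor expansion terminate and keeps $\det J$ a unit modulo the square-zero ideal. The only cosmetic difference is that the paper first applies the Rabinowitsch trick to reduce to the case where $\Delta$ is already a unit of $S$, whereas you keep the localization and invoke the universal property of $S_\Delta$ directly; both routes then carry out the identical correction step.
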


\begin{proof}
One can obtain a presentation for $S_\Delta$ from that of $S$ by adding
a generator $x_{n+1}$ and relator $x_{n+1}.\Delta - 1$. This produces a
new matrix whose determinant is the square of the original one.  Thus,
changing notation, it can and will be assumed that $\Delta$ is already
a unit (invertible in $S$).

Given any map from $S = P/I_0$ to $\bar A=A/Z$, where $Z$ is an ideal of
square zero, for each generator $x_j$ of $P$ choose $a_j \in A$ so that 
$\ovl{x\,}\!_j \mapsto \ovl{a\,}\!_j$ $(1 \leq j \leq n)$.  This defines
a homomorphism $P \to A$, $x_j \mapsto a_j$.  It will be deformed to one
annihilating $I_0$, with $x_j \mapsto a_j+z_j$, for certain $z_j \in Z$. 
The given map $S \to \bar A$ will then lift to $S \to A$.

Since $Z^2 = 0$, each condition that $f_i$ maps to $0$ is a linear
(or affine) equation in the $z_j$.  Explicitly, in vector notation,
$f_i(\vec{a}+\vec{z}) = f_i(\vec{a}) + \sum_j \frac{\partial f_i}{\partial x_j}
(\vec{a}).z_j = 0$.  All $z_j$ with $j > c$ can be freely chosen, say set to $0$.  
The coefficient matrix is then an image of $J$, defined above.  The system can 
be solved because its determinant has the same image in $\bar A$ as 
$\Delta = \textrm{det}(J)$, hence is a unit of $A$ since $Z$ is nilpotent. 
\end{proof}

To prepare for use in arguments, we refine previous notation.
Given ideals $I_0 \subseteq I_1$ of $P = R[x_1,\dots,x_n]$, 
define $S = P/I_0$, $\ovl S = P/I_1$, $I = I_1/I_0$. 
Then $I$ is an ideal of $S$ for which $S/I \cong \ovl S$.
To fix notation, write $I_0 = (f_1, \dots, f_{c_0})$ (so $c_0 = c$)
and  $I_1 = (f_1, \dots, f_{c_1})$.
In practice, generators for $I_1$ will be given and $I_0$ will be
defined by selecting a certain number $c_0$ of these.  There are Jacobian
matrices $J_0$ and $J_1$, of sizes $c_0 \times n$, and $c_1 \times n$,
each with $(i,j)$-entry the image of $\frac{\partial f_i}{\partial x_j}$
in $S$.  Fix $\ovl\fp \in \Spec(\ovl S)$, lifting it to $\fp \in \Spec(S)$
and $\tilde\fp \in \Spec(P)$.  Let $\ovl J_0$, $\ovl J_1$ denote the images of
$J_0$, $J_1$ with entries in $\kappa(\ovl\fp)$, the field onto which
$\ovl S_{\ovl\fp}$ maps.

\begin{prop}\label{I1I0}
With notation as just above, let $c_0$ be the rank of $\ovl J_1$ over
$\kappa(\ovl\fp)$.  After some reordering of the $f_i$, the rows of $\ovl J_0$,
corresponding to generators $f_1, \dots f_{c_0}$ of $I_0$, form a basis
for the rowspace of $\ovl J_1$.  The universal derivation $d_S: S \to \Omega$,
where $S = P/I_0$, maps the ideal $I = I_1/I_0$ into $\fp \Omega$. 
\end{prop}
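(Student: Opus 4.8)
The plan is to separate the two assertions, since the first is purely a statement of linear algebra over the field $\kappa(\ovl\fp)$ and the second will follow from it together with a direct computation of $d_S$ on elements of $I$.

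\textbf{Step 1: the rank statement.} By definition $c_0$ is the rank of $\ovl J_1$, the $c_1 \times n$ matrix over the field $\kappa(\ovl\fp)$. A matrix of rank $c_0$ has $c_0$ linearly independent rows, so after reordering the generators $f_1,\dots,f_{c_1}$ of $I_1$ we may assume the first $c_0$ rows of $\ovl J_1$ are linearly independent; since the rank is exactly $c_0$, these rows then span the rowspace of $\ovl J_1$, i.e.\ form a basis for it. The matrix $\ovl J_0$, being the $c_0 \times n$ submatrix consisting of precisely those first $c_0$ rows (because $I_0$ was defined by selecting $f_1,\dots,f_{c_0}$ from the chosen generators of $I_1$), therefore has rows that form a basis for the rowspace of $\ovl J_1$. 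Note that this reordering is performed once and for all and is consistent with the earlier convention that $I_0 = (f_1,\dots,f_{c_0})$ with $c_0 = c$.

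\textbf{Step 2: $d_S(I) \subseteq \fp\Omega$.} Recall $\Omega = \Omega_{S/R}$ is the quotient of $S^n$ by the submodule generated by the $df_i$ with $1 \le i \le c_0$, where $df_i = \sum_j \frac{\partial f_i}{\partial x_j}\,dx_j$ and the images of the partial derivatives lie in $S$. Pick any element of $I = I_1/I_0$; it is represented by some $g \in I_1$, so $g = \sum_{i=1}^{c_1} h_i f_i$ in $P$ for suitable $h_i \in P$. Then in $\Omega$ we have $d_S(g) = \sum_i (h_i\, df_i + f_i\, dg_i)$; but each $f_i$ with $i \le c_1$ lies in $I_1$, hence maps into $\fp$ (as $\fp$ lies over $\ovl\fp$ and $I_1 \subseteq \fp$ after passing to $S$), so the terms $f_i\,dg_i$ already lie in $\fp\Omega$. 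It remains to handle $\sum_i h_i\, df_i$ modulo $\fp\Omega$. For $i \le c_0$ the class $df_i$ is $0$ in $\Omega$. For $c_0 < i \le c_1$, Step 1 tells us that the row $\ovl J_1^{(i)}$ of $\ovl J_1$ lies in the span of the rows $\ovl J_0^{(1)},\dots,\ovl J_0^{(c_0)}$ over $\kappa(\ovl\fp)$; lifting a linear dependence, there exist $\lambda_{ik} \in S$ (in fact one may arrange them in $S_\fp$, or clear denominators) such that the vector of partial derivatives of $f_i$ equals $\sum_{k \le c_0}\lambda_{ik}\,(\text{partials of }f_k)$ modulo $\fp S^n$. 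Hence $df_i \equiv \sum_{k} \lambda_{ik}\, df_k \equiv 0 \pmod{\fp\Omega}$, since each $df_k$ with $k \le c_0$ vanishes in $\Omega$. Summing over $i$ gives $d_S(g) \in \fp\Omega$, as required.

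\textbf{Main obstacle.} The delicate point is Step 2's passage from a linear dependence \emph{over the residue field} $\kappa(\ovl\fp)$ to a congruence \emph{in $\Omega$ modulo $\fp\Omega$}. One must be careful that the coefficients $\lambda_{ik}$ obtained from the field-level dependence can be taken in $S$ (or $S_\fp$) and that the ``error'' in lifting genuinely lands in $\fp S^n$ rather than merely in the kernel of reduction to $\kappa(\ovl\fp)$ — here one uses that $\kappa(\ovl\fp) = \kappa(\fp)$ is the residue field of $S_\fp$ and that $\Omega \otimes_S \kappa(\fp)$ is the reduction of $\Omega$ modulo $\fp$, so a relation holding after tensoring with $\kappa(\fp)$ holds in $\Omega$ modulo $\fp\Omega$ exactly. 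Once this identification is made cleanly, the computation is routine.
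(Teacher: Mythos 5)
Your overall route is the same as the paper's: select $c_0$ rows forming a basis, write $g=\sum_k h_k f_k$, use the product rule so that the terms $f_k\,dh_k$ land in $\fp\Omega$ (you wrote $f_i\,dg_i$; a harmless typo for $f_i\,dh_i$), and dispose of the rows with $k>c_0$ by expressing them through the basis rows. Step 1 is fine. The genuine gap is exactly the point you single out as the ``main obstacle'', and the repair you propose does not work. A linear dependence over $\kappa(\ovl\fp)$ has coefficients in the fraction field of $S/\fp$ (the same field as $\kappa(\ovl\fp)$); this is the residue field of the local ring $S_\fp$, not a quotient of $S$, and $S\to\kappa(\ovl\fp)$ is not surjective unless $\fp$ is maximal. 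So the $\lambda_{ik}$ can be taken in $S_\fp$, or in $S$ only after clearing a denominator $b\notin\fp$; the first choice yields a congruence modulo $\fp\,(S_\fp\otimes_S\Omega)$, i.e.\ a statement in the localization, and the second yields only $b\,d_S(g)\in\fp\Omega$. Your claimed identification is false: $\Omega\otimes_S\kappa(\ovl\fp)$ is $(S_\fp\otimes_S\Omega)/\fp(S_\fp\otimes_S\Omega)$, not $\Omega/\fp\Omega$, and the kernel of $\Omega\to\Omega\otimes_S\kappa(\ovl\fp)$ is in general strictly larger than $\fp\Omega$; a relation valid after tensoring with $\kappa(\ovl\fp)$ holds modulo $\fp\Omega$ only after localizing at $\fp$.

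You are in good company: the paper's own proof makes the same silent leap when it asserts that for $f\in I_1$ there are $g_i\in P$ with $df-\sum_i g_i\,df_i\in\tilde\fp^n$, and in fact the unlocalized conclusion can fail. Take $R=k$ of characteristic different from $2$, $P=k[x,y]$, $f_1=(x^2-x)y$, $f_2=(x^2+x)y$, $\tilde\fp=(y)$: then $c_0=1$ and either single row is a basis, yet $xy=\frac{1}{2}(f_2-f_1)\in I_1$ while $d_S(xy)=(y,x)\notin\fp\Omega$, since reducing modulo $y$ would force $x(x-1)$ (resp.\ $x(x+1)$) to divide $x$ in $k[x]$. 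What is true, and what the proof of Theorem~1 actually uses after replacing $P$ by $P_{\tilde\fp}$, is the localized statement: there the coefficient lifting is legitimate because $P_{\tilde\fp}\to\kappa(\ovl\fp)$ is surjective and the kernel of $P_{\tilde\fp}^n\to\kappa(\ovl\fp)^n$ is exactly $\tilde\fp P_{\tilde\fp}^n$. So either prove the result in that localized form (the image of $I$ lands in $\fp\,(S_\fp\otimes_S\Omega)$), or record only the weaker conclusion $b\,d_S(I)\subseteq\fp\Omega$ for some $b\notin\fp$; your argument, like the one in the paper, delivers no more than that.
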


\begin{proof} 
The rows of $\ovl J_1$ lie in $k(\ovl\fp)^n$, so contain a maximal independent
set of size $c_0 \leq n$.  We work with polynomials, and derivations
induced from $d: P\to P^n$, $f\mapsto(\frac{\partial f}{\partial x_1},
\dots \frac{\partial f}{\partial x_n})$, where $P = R[x_1,\dots,x_n]$.
Let $i$ range over $[1,c_0]$.  By the choice of generators $f_i$ of $I_0$,
for each $f \in I_1$ there are $g_i \in P$ for which $df - \sum_i g_i.df_i
\in \tilde\fp^n \subset P^n$.  Recall that $S = P/I_0$, so the $df_i$ are 
relators of the universal $S$-module $\Omega = \Omega_{S/R}$, a quotient of 
$P^n$ (via $S^n)$.  Thus $d_S: S \to \Omega$ maps $I_1/I_0$ into $\fp \Omega$.
\end{proof}

The following basic result will also be needed.

\begin{lem}\label{Naka}
Let $M$ be a direct sum $M_1 \oplus M_2$ of modules over $S_\fp$, 
where $\fp \in \Spec(S)$.  If the submodule $M_1$ is finitely generated 
and contained in $\fp M$, then $M_1 = 0$. 
\end{lem}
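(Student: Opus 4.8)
The plan is to recognize this statement as a mild repackaging of Nakayama's lemma, with the splitting $M = M_1 \oplus M_2$ playing the role of converting the hypothesis $M_1 \subseteq \fp M$ into the cleaner condition $M_1 = \fp M_1$. First I would pin down notation: since all the modules in sight are over the local ring $S_\fp$, whose maximal ideal is $\fp S_\fp$, the expression $\fp M$ denotes $(\fp S_\fp)M$, and for any $S_\fp$-module this coincides with the product of the maximal ideal by that module. In particular the decomposition $M = M_1 \oplus M_2$ induces $\fp M = \fp M_1 \oplus \fp M_2$ as submodules.

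Next I would bring in the projection $\pi \colon M \to M_1$ attached to the direct sum decomposition, which restricts to the identity on $M_1$. Applying $\pi$ to the equality $\fp M = \fp M_1 \oplus \fp M_2$ gives $\pi(\fp M) = \fp M_1$. Since by hypothesis $M_1 \subseteq \fp M$, applying $\pi$ yields $M_1 = \pi(M_1) \subseteq \pi(\fp M) = \fp M_1$, so in fact $M_1 = \fp M_1$. It is exactly here that the direct summand hypothesis is essential: without it, the inclusion $M_1 \subseteq \fp M$ alone would not force $M_1 = \fp M_1$.

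Finally I would invoke the classical fact that a finitely generated module $N$ over a local ring with $N = \mathfrak{m}N$ must vanish. To keep the section self-contained I would include the short minimal-generating-set argument: choose generators $m_1, \dots, m_k$ of $M_1$ with $k$ least; from $m_k = \sum_i a_i m_i$ with all $a_i \in \fp S_\fp$ one gets $(1 - a_k)m_k = \sum_{i<k} a_i m_i$, and $1 - a_k$ is a unit of the local ring $S_\fp$, so $m_k$ lies in the span of the remaining generators, contradicting minimality unless $k = 0$; hence $M_1 = 0$.

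The only genuine point requiring attention is the correct reading of $\fp M$ together with the observation that the splitting is precisely what makes the reduction to $M_1 = \fp M_1$ valid; past that there is no real obstacle, as the remainder is Nakayama's lemma in lightly disguised form.
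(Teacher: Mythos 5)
Your proof is correct and follows essentially the same route as the paper: both reduce the hypothesis $M_1 \subseteq \fp M$ to $M_1 = \fp M_1$ by exploiting the splitting $\fp M = \fp M_1 \oplus \fp M_2$ (you via the projection onto $M_1$, the paper via passing to $M/\fp M$), and then conclude by Nakayama's Lemma for the finitely generated module $M_1$ over the local ring $S_\fp$. The only difference is that you spell out the standard minimal-generating-set proof of Nakayama, which the paper simply cites.
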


\begin{proof}
Note that $\ovl{M} = M/\fp M$ is a direct sum $\ovl{M}_1\oplus\ovl{M}_2$,
as $\fp M$ is also a direct sum.  Then $\fp M_1 = M_1$ since $\ovl{M}_1=0$.
By Nakayama's Lemma [Tag 00DV], $M_1 = 0$. 
\end{proof}

\begin{proof}[Proof of \Cref{fund}]
All parts reduce to assertions about ring morphisms.

(b) $\Rightarrow$ (a): This was done in \Cref{loc}.

(a) $\Rightarrow$ (c): Easy arguments with diagrams involving $A \to A/I$,
where $I^2 = 0$, show that formal smoothness of $R \to S$ is preserved
under any localization of $S$, say $S_\fp$, then also for the induced
$R_{\fp'} \to S_\fp$, where $\fp \mapsto \fp'$ in $\Spec(S) \to \Spec(R)$.

(b) $\Rightarrow$ (d): This is trivial, from the definitions.

(d) $\Rightarrow$ (c): If the local ring $S_\fp$ is the localization
of a standard smooth $R$-algebra $S_1$, by previous steps $R \to S_1$
is formally smooth and, by localizing, so is $R_{\fp'} \to S_\fp$.

(c) $\Rightarrow$ (b): To retain consistency with earlier notation, we
begin with an $R$-algebra called $\ovl S$, finitely presented as $P/I_1$,
and a formally smooth stalk map $R_{\fp'} \to \ovl S_{\ovl\fp}$.  It suffices 
to find some $\ovl g \in \ovl S \backslash \ovl\fp$ for which $\ovl S_{\ovl g}$ 
is standard smooth over $R$.

After choosing an ideal $I_0 \subset I_1$ of $P$ as in \Cref{I1I0},
write as before $S = P/I_0$, $I = I_1/I_0$, $S' = S/I^2$,
and note that $\ovl S \cong S/I$.
By \Cref{I1I0}, $d_S: S \to \Omega$ maps $I$ into $\fp\Omega$ and so, from 
\Cref{univ}, $d_S': S' \to \ovl\Omega$ maps $I/I^2$ into $\ovl\fp\,\ovl\Omega$.
 
To localize, lift the given $\ovl\fp$ to $\tilde\fp \in \Spec(P)$ and
work with analogous definitions from $P_{\tilde\fp}$ in place of $P$.
Let for example $(I_1)$ denote $I_1.P_{\tilde\fp}$ and $(I)$ be $(I_1)/(I_0)$.
To simplify notation, $T$ in place of $S$ signifies localization.
Thus the given $\ovl S_{\ovl\fp}$ becomes $\ovl T$.
We rename $\Omega$ as $\Omega_{T/R_{\fp'}}$ (isomorphic to 
$S_\fp\otimes_S \Omega_{S/R}$), and let $\ovl\Omega = \Omega/(I)\Omega$.

The result involving $d_S'$ now yields: $d_T': T' \to \ovl\Omega$ maps 
$(I)/(I)^2$ into $\ovl\fp\ovl\Omega$.  By hypothesis $\ovl T \cong T/(I)$ 
is formally smooth over $R_{\fp'}$, so \Cref{summ} implies that 
$(I)/(I)^2$ maps isomorphically onto a direct summand of $\ovl\Omega$.
By \Cref{Naka}, $(I) = (I).(I)$.
As $(I) = (I_1)/(I_0)$ is a finitely-generated proper ideal of the 
local ring $T = P_{\tilde\fp}/(I_0)$, a form of Nakayama's Lemma implies 
that $(I)$ is the zero ideal, so $(I_1)= (I_0)$ in $P_{\tilde\fp}$. 
Returning to ideals of $P$, finite generation of $I_1$ now implies that 
$gI_1 \subset I_0$ for some $g \in P \backslash \tilde\fp$.  Thus $\ovl 
S_{\ovl g} \cong P_g/I_0P_g$, a standard smooth $R$-algebra by \Cref{loc}.
\end{proof}

Recall that \Cref{I1I0} shows how to choose a certain number $c_0$ of
polynomials from any given list of generators of $I_1$.
Assuming a formally smooth stalk map, it was shown just above that all
remaining relators in the list become redundant, after passing to a
suitable principal localization.  As a rough restatement, within each
stalk a failure of redundancy in the additional relators is equivalent
to the failure of smoothness for the map at that stalk.

\section{A formula for the smooth locus}

Going beyond a test for the smoothness of individual stalk maps, we present
a formula for the smooth locus of an open affine subscheme over $R$, now
written as $\Spec(\ovl S)$, $S \cong P/I_1$, to conform with earlier conventions. 
Here $P = R[x_1,\dots,x_n]$ and $I_1 = (f_1, \dots, f_{c})$, so $c=c_1$.
The idea is to study square submatrices $J$ of the $c \times n$ Jacobian 
matrix defined from the given presentation of $\ovl S$.

\begin{proof}[Proof of \Cref{locus}]
Given any $\ovl\fp \in \Spec(\ovl S)$, let $\fp$ be the lift to $\Spec(P)$.
Following \Cref{I1I0}, there is a largest $c_0 \geq 0$ for which
one can choose $c_0$ of the $f_i$, generating an ideal $I_0$, then
$c_0$ of the variables $x_j$, used to form a submatrix $J$ of the
Jacobian matrix such that $\textrm{det}(J) \notin \fp$.
If $\ovl\fp$ is in the smooth locus of $\ovl S$, we know from the end of 
the proof of \Cref{fund} that $gI_1 \subset I_0$ for some $g \notin \fp$, or 
equivalently $\fp \notin V((I_0:I_1))$, where $(I_0:I_1) = 
\{g \in P \mid gI_1 \subset I_0\}$.  Conversely, if $\fp$ satisfies this 
last condition then $\ovl\fp$ is in the smooth locus, by \Cref{loc} and
preservation of smoothness (of either kind) under localization at prime ideals.

Reversing the previous point of view, one now starts by choosing a natural number
$c_0 \leq \textrm{min}(c,n)$ and selecting $c_0$ members from each of the lists 
$f_1, \dots, f_c$ and $x_1, \dots, x_n$.  From these one defines an ideal
$I_0 \subset I_1$ and a $c_0 \times c_0$ submatrix $J$ of the Jacobian matrix.
Let $\Delta$ be the image in $S$ of $\textrm{det}(J)$.  As just above, each
$\ovl\fp$ in $D(\Delta) \cap V((I_0:I_1))^\mathsf{c}$ lies in the smooth locus 
of $\ovl S$. Conversely, varying the choices, such open subsets of $\Spec(\ovl S)$
cover the whole smooth locus, by arguments in the previous paragraph.

The number of open sets used is the number of square submatrices
(including the empty matrix) of a general $c \times n$ matrix.
This is symmetric in $c$ and $n$, so to count we may suppose $c \leq n$.  
The number is $\sum_{i=0}^c {\binom{n}i}{\binom{c}i} =
  \sum_{i=0}^c {\binom{n}i}{\binom c{c-i}} = {\binom{n+c}c}.$
\end{proof}

\section{Local to global for formally smooth morphisms}

The nontrival direction of the following fundamental theorem is
a remarkable local-to-global property of formal smoothness.
It is basically a cohomological result from deformation theory, but
we will show how it follows from a careful analysis using elementary methods.
Instead of a local hypothesis of the form `$R \to S$ of finite type', it
suffices to require only `essentially of finite type' [Tag 00QM]:
$S$ is a localization of an $R$-algebra of finite type.
Examples of such morphisms, not locally finite, are easily found
using transcendental extensions of fields.
With `around' in the sense defined in the introduction, the result we prove is:

\begin{thm}\label{ffund}
For every scheme morphism $\f: X \to Y\;$that is locally essentially of finite type,
the following are equivalent:
\nl (a) $\f$ is formally smooth;
\nl (b) $\f$ is formally smooth around all points of $X$.
\end{thm}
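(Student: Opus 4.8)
The plan is to prove the nontrivial direction (b) $\Rightarrow$ (a), since (a) $\Rightarrow$ (b) is immediate from [Tag 02H3] as noted in Section 2. So assume $\f: X \to Y$ is locally essentially of finite type and formally smooth around every point of $X$; we must show that for any square-zero extension $\Spec(A/Z) \to \Spec(A)$ and any $Y$-morphism $h: \Spec(A/Z) \to X$, there exists a $Y$-morphism $\Spec(A) \to X$ lifting $h$. Since formal smoothness of $\f$ can be checked on an affine open cover of $Y$ and then on an affine open cover of $X$ (using the trivial facts about restrictions from Section 2), the real content is a statement about a ring map $R \to S$ that is essentially of finite type and formally smooth at every prime of $S$ (equivalently, by \Cref{fund}, essentially standard smooth at every stalk, or standard smooth around every point of $\Spec S$). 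The catch is that the affine target $\Spec(A)$ in the lifting problem need not map into a single affine open of $Y$, so one genuinely has to glue local lifts — this is where the local-to-global difficulty lives.

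The key steps, in order: First, reduce to the affine case $R \to S$, essentially of finite type, standard smooth around each point of $\Spec S$; here $\{x \in \Spec S\}$ is covered by principal opens $D(g_i)$ with each $S_{g_i}$ standard smooth (hence formally smooth) over $R$, and by quasi-compactness finitely many suffice. Second, given $\f: S \to A/Z$ over $R$ with $Z^2 = 0$, produce on each $D(g_i)$ a lift $\f_i: S_{g_i} \to (A/Z)_{\bar g_i} \to$ (a localization of $A$) using formal smoothness of $S_{g_i}$; the local lifts agree modulo $Z$ but may differ. Third — the heart of the argument — measure the discrepancy between two local lifts by a derivation valued in a module built from $Z$, show that these discrepancies form a Čech $1$-cocycle for the sheaf $\mathcal{H}om_S(\Omega_{S/R}, \widetilde{Z})$ on $\Spec S$, and that the obstruction to patching the $\f_i$ into a global lift is its class in $H^1$ of that sheaf. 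Fourth, kill the obstruction: since $S$ is essentially of finite type over $R$ and smooth around each point, $\Omega_{S/R}$ is a finitely presented projective $S$-module (this is exactly the projectivity assertion of \Cref{summ} applied to the relevant presentations, plus localization), so $\mathcal{H}om_S(\Omega_{S/R}, \widetilde{Z})$ is a quasi-coherent sheaf on the affine scheme $\Spec S$, whose higher cohomology vanishes; hence the cocycle is a coboundary and the local lifts can be adjusted to agree on overlaps, giving a global lift $S \to A$.

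The main obstacle — and the place requiring real care in the elementary spirit of the paper — is step three: setting up the torsor/cocycle formalism for liftings by hand, without invoking the cotangent complex. Concretely, one must check that if $\f_1, \f_2: S \to A$ both lift a given $\bar\f: S \to A/Z$, then $\f_1 - \f_2$ is an $R$-derivation $S \to Z$ (using $Z^2 = 0$ to see additivity of the difference is a derivation), that composing with $Z \to Z_{g_i}$ is compatible, and that the collection of differences over a cover satisfies the cocycle condition; conversely that modifying a local lift by any derivation valued in the appropriate localization of $Z$ gives another local lift. These are the "deformation theory" computations the introduction alludes to, and they are genuinely elementary but fiddly — the bookkeeping with localizations of $Z$ as a non-quasi-coherent-looking but actually fine object (it becomes quasi-coherent after tensoring suitably, or one works with $\mathcal{H}om$ into the quasi-coherent sheaf associated to $Z$ viewed as an $S$-module via $\bar\f$) is where one could easily slip. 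One expects the author to streamline this by using the universal property of $\Omega_{S/R}$ (Section 3) to convert derivations into module homomorphisms from the start, so that the cocycle lives directly in $\check{H}^1(\Spec S, \mathcal{H}om_S(\Omega_{S/R}, \widetilde Z))$, and then to invoke only the triviality of quasi-coherent cohomology on an affine — a fact that, while standard, is the one slightly heavier input the argument cannot avoid.
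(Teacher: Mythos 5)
Your overall strategy---lift locally, measure discrepancies by derivations via \Cref{torsor}-type reasoning and the universal property of $\Omega$, and then correct the local lifts so they glue---is indeed the paper's strategy. But your opening reduction contains a genuine gap: you assert that formal smoothness of $\f$ ``can be checked on an affine open cover of $Y$ and then on an affine open cover of $X$ (using the trivial facts about restrictions from Section 2).'' The trivial facts of Section 2 only give the easy direction (a) $\Rightarrow$ (b); the converse---checking formal smoothness on an affine cover of source or target---\emph{is} the local-to-global statement being proved, so as stated the reduction is circular. Nor can it be dismissed as routine: in a test problem $\Spec(A/Z)\to X$ over $Y$, the map from $\Spec(A/Z)$ need not factor through a single affine open $U\subset X$, and $\Spec(A)\to Y$ need not factor through a single affine open $V\subset Y$, so even granting your affine statement about a single ring map $R\to S$ you face a second gluing problem of exactly the same kind to reach general $X$ and $Y$. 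This is why the paper never reduces to one ring map: it covers the test scheme $T'=\Spec(\bar A)$ by finitely many principal opens mapping into \emph{varying} charts $U_i\to V_i$ of $\f$, lifts on each, and runs the whole discrepancy/patching argument in that setting (with the attendant care that $\delta_{ij}$ is only $R_i$-linear and is compared on stalks of $U_{ij}$). Your cocycle formalism can be repaired by transplanting it to $T'$ with the sheaf of homomorphisms from $\g^*\Omega_{X/Y}$ into $\widetilde Z$---that is the standard textbook proof---but the reduction as you wrote it does not exist. A secondary slip: \Cref{fund} is not available here, since $\f$ is only locally \emph{essentially} of finite type, not locally of finite presentation, so you cannot replace ``formally smooth around each point'' by ``standard smooth around each point''; fortunately you never need to, since formal smoothness of the pieces is the hypothesis.

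Where your argument is sound (the gluing step itself), it also takes the conventional route that the paper expressly sets out to avoid: you invoke vanishing of \v{C}ech $H^1$ for quasi-coherent sheaves on an affine scheme, whereas the paper replaces that citation by an explicit, elementary computation. Concretely, the paper uses essential finiteness to pick finitely many generators $s_\gamma$ of each $S_i$, uses the splitting of $\Omega_{S_i/R_i}$ off a free module (\Cref{summ}) to extend $f_j^N\delta_{ij}$, for one uniform exponent $N$, to a derivation $\delta^{(N)}_{ij}:S_i\to I_i$ defined on all of $I_i$ rather than only $I_{ij}$; verifies the twisted cocycle identity $\delta^{(N)}_{ik}-\delta^{(N)}_{jk}=f_k^N\delta_{ij}$ for $N$ large (killing the finitely many obstructing elements, each annihilated by a power of $f_k$); and then corrects the lifts by $\sigma_i'=\sigma_i-\sum_k h_k\delta^{(N)}_{ik}$ with a partition of unity $\sum_k h_k f_k^N=1$. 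In effect this is a hands-on proof of exactly the coboundary statement you quote, specialized to the case at hand. Your version, once the reduction above is fixed, would still need the quasi-coherence of the Hom sheaf, hence local finite presentation of the differentials; this does hold on the formally smooth charts ($\Omega$ is finitely generated there and projective by \Cref{summ}), but it is an extra point to check, and citing affine cohomology vanishing buys brevity at the cost of the heavier input the paper was written to bypass.
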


The equivalence is in fact true for arbitrary $\f$, but the given hypothesis on $\f$ is 
sufficiently general to include all situations that normally occur, and significantly 
simplifies the proof.  In this context it is usual to cite a result on the triviality 
of \v Cech cohomology for quasicoherent sheaves on affine schemes, but we do not do so. 
 
Some historical background is given for those interested. 
The very last (fourth) issue of [\EGA], Vol.~4 states the equivalence in full 
generality as Proposition (17.1.6), but the proof cites 
something that requires $\f$ to be locally of finite presentation. 
Grothendieck [\Gro, Remarque 9.5.8] (also see [\MO]) discussed that gap,
found soon after publication.  He stated that there is no problem for 
locally finite morphisms, adding that the general
result would follow from a proof of a conjecture on 
the structure of locally projective modules.  A few years later, in 1971, 
the deep analysis carried out by Raynaud and Gruson [\RG], building on 
earlier work of Kaplansky [\Ka], implied a full solution, once some 
technical details had been corrected in [\Gru].  

We ignore all this, and later related work, as the case we treat can be 
handled using only rudimentary machinery.  
This provides an instructive contrast to more conventional methods.
At the end, adjustments are made via partitions of unity 
analogous to ones used in complex analysis to prove certain theorems of 
Dolbeault and de Rham---see for example [\FG, p.\ 311].

Just as in [Tab 01UP], a one-line calculation (omitted) yields:

\begin{lem}\label{torsor}
Let $\bar A = A/I$, where $I^2 = 0$.  Given a ring morphism
$\sigma: R \to A$, regard $I$ as an $R$-module via the induced map 
$\ovl\sigma:R\to\bar A$ composed with the natural action of $\bar A$ on $I$.
Then the morphisms $R \to A$ that induce $\ovl\sigma$ 
are the functions of the form $\sigma+\delta$
for which $\delta: R \to I$ is an $R$-derivation. 
\end{lem}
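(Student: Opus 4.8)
The plan is to prove \Cref{torsor} by a direct computation, exactly the kind suppressed by the phrase ``a one-line calculation (omitted)'', but now spelled out carefully. First I would fix the ring morphism $\sigma\colon R\to A$ and recall that, because $I^2=0$, the $A$-module structure on $I$ factors through $\bar A = A/I$, and composing with $\ovl\sigma\colon R\to\bar A$ makes $I$ into an $R$-module; this is the module structure referred to in the statement. The key observation is that any other function $\tau\colon R\to A$ that reduces to the same $\ovl\sigma$ must satisfy $\tau(r)-\sigma(r)\in I$ for all $r\in R$, so writing $\delta = \tau-\sigma$ we automatically get a function $\delta\colon R\to I$, and conversely every $\tau$ of the form $\sigma+\delta$ with $\delta\colon R\to I$ reduces to $\ovl\sigma$. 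So the content is entirely in deciding \emph{which} such $\tau = \sigma+\delta$ are ring morphisms.

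Next I would impose the ring-morphism conditions on $\tau$ and translate them into conditions on $\delta$. Additivity and $R$-linearity (over $R$, meaning compatibility with the prescribed $R$-algebra structures) of $\tau$ reduce, after subtracting the corresponding identities for $\sigma$, to additivity and $R$-linearity of $\delta$ as a map into the $R$-module $I$. For multiplicativity, I would expand
\[
\tau(rs) = \sigma(rs) + \delta(rs) = \sigma(r)\sigma(s) + \delta(rs),
\]
and compare with
\[
\tau(r)\tau(s) = \bigl(\sigma(r)+\delta(r)\bigr)\bigl(\sigma(s)+\delta(s)\bigr)
 = \sigma(r)\sigma(s) + \sigma(r)\delta(s) + \delta(r)\sigma(s) + \delta(r)\delta(s).
\]
Here the crucial point is that $\delta(r),\delta(s)\in I$ and $I^2=0$, so the last term $\delta(r)\delta(s)$ vanishes; moreover $\sigma(r)\delta(s)$ equals $\ovl\sigma(r)\cdot\delta(s)$, i.e.\ the $R$-module action of $r$ on $\delta(s)$, because the $A$-action on $I$ factors through $\bar A$. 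Thus multiplicativity of $\tau$ is equivalent to the Leibniz rule $\delta(rs) = r\cdot\delta(s) + s\cdot\delta(r)$. Combining with the additive/linear conditions, $\tau$ is a ring morphism exactly when $\delta$ is an $R$-derivation $R\to I$, which is the claim. I would also note the unit is handled automatically: $\tau(1) = \sigma(1)+\delta(1) = 1 + \delta(1)$, and applying the Leibniz rule to $1 = 1\cdot 1$ forces $\delta(1)=0$, so $\tau(1)=1$.

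The only place requiring the slightest care—and the ``main obstacle'' if there is one—is bookkeeping about the two $R$-module structures on $I$: the abstract one declared in the statement (via $\ovl\sigma$ and the $\bar A$-action) versus the $A$-module structure, which coincide precisely because $I^2=0$ kills the ambiguity $I\cdot I = 0$. Once that identification is made explicit, every cross term in the product expansion is exactly the $R$-action, and the computation closes without further subtlety. I would therefore present the argument in the order: (1) reduce to $\tau = \sigma+\delta$ with $\delta\colon R\to I$; (2) record that the $A$- and $R$-actions on $I$ agree; (3) expand the product to see multiplicativity $\iff$ Leibniz; (4) note additivity/linearity transfer directly and the unit condition is automatic; conclude.
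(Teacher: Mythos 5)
Your computation is correct and is precisely the "one-line calculation (omitted)" the paper alludes to: the cross term $\delta(r)\delta(s)$ dies because $I^2=0$, the $A$-action on $I$ factors through $\bar A$ so the remaining cross terms are exactly the declared $R$-module action, and multiplicativity becomes the Leibniz rule, with the unit and linearity conditions transferring trivially. Nothing is missing; you have simply written out the argument the paper suppresses.
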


\begin{proof}[Proof of \Cref{ffund}]
One direction is trivial, from observations made in Section~2.
Given (b), one sees from the hypotheses
that for each $x \in X$  
there are affine sets $U \subset X$ and $V \subset Y$ with $x \in U$
such that $f$ restricts to a formally smooth morphism $U \to V$ that is 
essentially of finite type.
Using notation $\bar A = A/I$, where $I^2 = 0$, and $T' = \Spec(\bar A)$,
with underlying set identified throughout with that of $T = \Spec(A)$,
it must be shown that any morphism $\g: T' \to X$ factors through the 
usual $T' \to T$.   The morphisms here are already over $Y$, via 
compositions with $\f: X \to Y$.  

As a first step, in which notation will be defined and various choices made, each 
$t \in T'$ lies in a principal affine open set contained in some $\g^{-1}(U)$ 
with $U$ as above. The affine scheme $T'$ is covered by finitely many of these
sets, henceforth called $T'_1, \dots T'_l$, of the form $\Spec(\ovl{A_{f_i}})$. 
We write the localization $A_{f_i}$ as $A_i$ and use a similar 
convention for the $\bar A$-module $I$, but not more generally. 
Variables $i,j,k$ range over $[1..l]$, the initial focus being on a single $i$.
Also choose as above affine opens $U_i \subset X$, 
$V_i \subset Y$ so that $\g$ and $\f$ restrict to maps $T'_i \to U_i \to V_i$.
Each corresponding ring homomorphism $R_i \to S_i \to \bar A_i$ is regarded as 
forming a morphism $S_i \to \bar A_i$ of $R_i$-algebras, with $S_i$ 
essentially finite over $R_i$. 
By the formal smoothness assumption, one can choose a lift $\sigma_i: S_i \to A_{i}$.  

To study morphisms on subschemes of $T = \Spec(A)$,
notation will be extended in an obvious way and natural identifications made, 
so for example $T_{ij}$ denotes the scheme $\Spec(A_{ij})$ on the set 
$T_i \cap T_j$.  The case $j=i$ is not excluded but will be of no interest.
The morphism $T_{ij} \to U_i$, obtained from composing $\sigma_i$ 
with $A_i \to A_{ij}$, factors through $U_{ij} = U_i \cap U_j$, a scheme 
which need not be affine but is open in $U_i$ and in $U_j$.  Thus both 
$\sigma_i$ and $\sigma_j$ will induce scheme morphisms $T_{ij} \to U_{ij}$. 
Composing with the inclusion $U_{ij} \to U_i$ produces two 
lifts $S_i \to A_{ij}$ of the same morphism $S_i \to \bar A_i \to \bar A_{ij}$.
By \Cref{torsor}, their difference (in the order $i,j$) is an
$R_i$-linear derivation $\delta_{ij}: S_i \to I_{ij}$.
Recall that $\delta_{ij}$ then factors as $S_i \to \Omega_{S_i/R_i} \to I_{ij}$.
  
By assumption $S_i$ is a localization of the $R_i$-subalgebra generated by 
some finite set $\{s_\gamma \mid \gamma \in \Gamma_i\}$ of its elements.
Each $\delta_{ij}(s_\gamma)$ is of the form $z_{\gamma,j}/f_j^{N(\gamma,j)}$,
for some chosen $z_{\gamma,j} \in I_i$ that minimizes 
$N(\gamma,j) \in \mathbb{N}$, with $i$ implicitly determined by the index $\gamma$.
We can then choose some upper bound $N \in \mathbb{N}$ 
for all the $N(\gamma,j)$, letting $i$ also vary.
 
 From the end of the proof of \Cref{summ}, $\Omega_{S_i/R_i}$ can be identified
with a direct summand of $\Omega_{P'_i/R_i}$, where $P'_i$ is a suitable
localization of the polynomial algebra with free generators $x_\gamma\ 
(\gamma \in \Gamma_i)$, such that $x_\gamma \mapsto s_\gamma$ defines a 
surjective morphism $P'_i \to S_i$. Passing to differentials, the $dx_\gamma$ 
form a $P'$-basis of $\Omega_{P'_i/R_i}$.  The projection $dx_\gamma \mapsto 
ds_\gamma$ fixes $\Omega_{S_i/R_i}$, and the above factorization of 
$\delta_{ij}$ gives a map with $ds_\gamma \mapsto z_{\gamma,j}/f_j^{N(\gamma,j)}$.
A multiple of the composition lifts to a map $\Omega_{P'_i/R_i} \to I_i$,
$dx_\gamma \mapsto f_j^{N-N(\gamma,j)}z_{\gamma,j}$.
Restriction to $\Omega_{S_i/R_i}$ then yields an $R_i$-linear derivation 
$\delta^{(N)}_{ij}: S_i \to \Omega_{S_i/R_i} \to I_i$ which, composed
with $I_i \to I_{ij}$, is $f_j^N\delta_{ij}: S_i \to I_{ij}$.  
As $N$ may later be increased further, note that for $M \in \mathbb{N}$ we have
$\delta^{(M+N)}_{ij} = f_j^M\delta^{(N)}_{ij}\!.$ 

In much the same way that $\delta_{ij}$ was defined, one can find a derivation 
$S_i \to I_{ij}$ that is the difference of two homomorphisms $S_i \to A_{ij}$ 
and, when calculated on the stalks of $U_{ij}$, which are algebras
over both $R_i$ and $R_j$, has the formula
$\delta_{ik}^{(N)}-\delta_{jk}^{(N)} - f_k^N\delta_{ij}$. 
 From the definitions, the image of each element of $S_i$ under this 
derivation lies in the kernel of $I_{ij} \to I_{ijk}$, so
is annihilated in $I_{ij}$ by some power of $f_k$.  
The derivation is determined by its effect on the finite set 
$\{s_\gamma \mid \gamma \in \Gamma_i\}$, and is multiplied by  $f_k^M$
if $N$ is increased by $M$.  Thus, for a sufficiently large $N$, all these
derivations $S_i \to I_{ij}$, with $i,j,k$ now varying over $1..l$, are 
identically zero.  One then has a formula, valid on stalks of $U_{ij}$:
\[ \delta_{ik}^{(N)}-\delta_{jk}^{(N)} = f_k^N\delta_{ij}.\]
The sets $D(f_k)$ cover $\Spec(A)$, so there are 
$h_k \in A$ with $\sum_k h_k f_k^N =1$.  For each $i$,
define the ring morphism $\sigma'_i : S_i \to A_i$, a
new lift of the original map $S_i \to \bar A_i$,
by $\sigma'_i = \sigma_i - \sum_k h_k\delta_{ik}^{(N)}$.
To compare corresponding scheme morphisms $T_i \to U_i$ and $T_j \to U_j$
on the overlap $T_{ij} \to U_{ij}$, one can take differences of
the ring homomorphisms on stalks (or use suitable affine subschemes
of $U_{ij}$).  Using previous formulas, calculations taking values in stalks 
of $T_{ij}$, then later just in $A_{ij}$, give:  \[ \sigma'_i - \sigma'_j = 
(\sigma_i - \sigma_j) - \sum_k h_k f_k^N \delta_{ij} = \delta_{ij}-\delta_{ij} = 0.\]
Thus the new morphisms $T_i \to U_i \to X$ patch together, forming a map
$T=\Spec(A)\to X$ through which the initially given 
$\psi:T'\!=\Spec(\bar A)\to X$ factors.
\end{proof}

\section{References}
\frenchspacing
{\ }

[\EGA] A. Grothendieck, J. Dieudonn\'e, {\em \'El\'ements de G\'eom\'etrie
Alg\'ebrique}, Publ. Math. IHES, Part I: 4(1960); Part II: 8(1961); Part III:
11(1961), 17(1963); Part IV: 20(1964), 24(1965), 28(1966), 32(1967).

[\FG] K. Fritzsche, H. Grauert, {\em From Holomorphic Functions to Complex Manifolds},
Springer, 2002. 

[\GW] U. G\"ortz, T. Wedhorn, {\em Algebraic Geometry I}, Vieweg, 2010.

[\Gro] A. Grothendieck, {\em Cat\'egories Cofibre\'es Additives et Complexe Cotangent 
Relatif}, Lect. Notes Math. 79, Springer-Verlag, 1968.

[\Gru] L. Gruson, Dimension homologique des modules plats sur an anneau commutatif 
noeth\'erien, {\em Symposia Mathematica}, Vol. XI (Convegno di Algebra Commutativa,
INDAM, Rome, 1971), Academic Press, London, 1973, 243--254. 

[\Ka] I. Kaplansky, Projective modules, Ann. Math. (2) 68 (1958), 37--377. 

[\MG] J. Majadas, A. Rodicio, {\em Smoothness, Regularity and Complete Intersection}, 
Cambridge University Press, 2010.

[\Ma] M. Maltenfort, {A New Look at Smoothness}, Expo. Math. 20 (2002), 59--93.

[\MO] Answer (by mdeland) to ``Possible formal smoothness mistake in EGA", 2010, 
{\tt http://mathoverflow.net/questions/10731/}

[\RG] M. Raynaud, L. Gruson,  Crit\`eres de platitude e de projectivit\'e, 
Techniques de <<platification>> d'un module, Inv. Math.\ 13, 1--89 (1971). 

[\St] The Stacks Project Authors, {\em Stacks Project}, 2005--2016,
\nl\hbox{\ }{\tt http://stacks.math.columbia.edu}

[\Va] R. Vakil, {\em The Rising Sea: Foundations of Algebraic Geometry}, 2010--2015,
\hbox{\ }{\tt http://math.stanford.edu/\~{}vakil/216blog/index.html}

\end{document}